\documentclass[11pt]{amsart}

\usepackage{pb-diagram}


     \usepackage{xcolor}
\usepackage[top=1.5in, bottom=1in, left=0.5in, right=3in]{geometry}
\pagestyle{plain}
\oddsidemargin 0.5 cm
\evensidemargin 0.5 cm
\addtolength{\textwidth}{ 2.8 cm}
\addtolength{\textheight}{1.4cm}
\addtolength{\topmargin}{-1.2cm}

\usepackage{amsfonts}
\usepackage{amsmath}
\usepackage{amssymb}
\usepackage{verbatim}
\usepackage{color}

\newcommand{\al}{\alpha}
\newcommand{\be}{\beta}


\newtheorem{theorem}{Theorem}[section]

\newtheorem{lemma}{Lemma}[section]

\newtheorem{remark}{Remark}[section]

\begin{document}

\title{A remark on K\"ahler metrics with conical singularities along a simple normal crossing divisor}

\author{Ved V. Datar$^*$, Jian Song$^\dagger$ }

\address{$*$ Department of Mathematics, Rutgers University, Piscataway, NJ 08854}

\email{veddatar@math.rutgers.edu}

\address{$\dagger$ Department of Mathematics, Rutgers University, Piscataway, NJ 08854}

\email{jiansong@math.rutgers.edu}

\thanks{Research supported in
part by National Science Foundation grants DMS-0847524 and the graduate fellowship of Rutgers University.}

\begin{abstract} 
Recently it was shown by H. Guenancia and  M. P$\breve{\text{a}}$un that a singular metric satisfying the conical K\"ahler-Einstein equation with a simple normal crossing divisor is equivalent to a conical metric along that divisor. In this note, we present an alternative proof of their theorem.
\end{abstract}

\maketitle

%

\maketitle

Let $(X,\omega)$ be an $n$-dimensional K\"ahler manifold with a smooth K\"ahler metric $\omega$. We fix a divisor $D = \sum_{j=1}^{N}(1-\be_j)D_j$, where $\be_j \in (0,1)$ and $D_j$'s are irreducible smooth divisors. We further assume that $D$ is a simple normal crossing divisor i.e   for any $p \in Supp(D)$ lying in the intersection of exactly $k$ divisors $D_1,\cdots, D_k$, there exists a coordinate chart $(U_p,\{z_j\})$ containing $p$, such that $D_j|_{U_p} = \{z_j =0\}$ for $j=1,\cdots,k$.

 If $s_j$ is the defining section of $D_j$ and $h_j$ is any smooth metric on the line bundle induced by $D_j$, then for sufficiently small $\epsilon_j>0$, $\theta_j = \omega+\epsilon_j \sqrt{-1} \partial\bar\partial|s_j|_{h_j}^{2\be_j}$ gives a K\"ahler metric on $X\backslash Supp (D_j)$ with cone angle $2\pi\be_j$ along $D_j$. Now, set 
\begin{equation}
\theta =\sum_{j=1}^{N}{\theta_j}.
\end{equation}
\noindent Then, $\theta$ is a smooth K\"ahler metric on $X \backslash Supp(D)$. Moreover, for any $p \in Supp(D)$ and any coordinate chart $(U_p,\{z_j\})$ as above, $\theta$ is uniformly equivalent to the standard cone metric
\begin{equation}
\omega_{p} = \sum_{j=1}^{k}\frac{\sqrt{-1}dz_j \wedge d\bar z_{j}}{|z_j|^{2(1-\be_j)}} + \sum_{j=k+1}^{N}{\sqrt{-1}dz_j \wedge d\bar z_{j}}.
\end{equation}

\noindent Next, for any $\lambda \in \mathbb R$, we consider the Monge-Amp\`ere equation
\begin{equation}\label{m.a}
\begin{cases}
(\omega+\sqrt{-1}\partial\bar\partial \varphi)^n = \frac{e^{- \lambda\varphi} \Omega}{\prod_{j=1}^{N}{|s_j|_{h_j}^{2(1-\be_j)}}}\\
\omega_{\varphi} = \omega + \sqrt{-1}\partial\bar\partial \varphi > 0
\end{cases}
\end{equation}
\noindent for some smooth volume form $\Omega$. By rescaling one can always assume that $\lambda = 1,0,-1$. In the case that $\lambda = 0$, we impose an additional normalization that $\sup_{M}\varphi = 0$. The above equation arises when one considers the problem of prescribing the Ricci curvature of a conical metric and was first studied by Yau in \cite{Y2}. A natural question is whether conversely, any metric $\omega_{\varphi}$ solving \eqref{m.a} is in fact conical. The answer is provided by \cite{D,B,JMR} in the case of a smooth divisor and by \cite{CGP,GP} in the general case. In the present note, we demonstrate that the situation with a simple normal crossing divisor is no harder than the one with a smooth divisor, thereby providing an alternate proof for the following theorem of H. Guenancia and  M. P$\breve{\text{a}}$un.
\begin{theorem}\cite{GP}\label{mt}
If $\varphi$ is any bounded solution to \eqref{m.a}, then there exists a constant $C>0$ such that 
\begin{equation}
C^{-1}\theta \leq \omega_{\varphi} \leq C\theta
\end{equation}
on $X \backslash Supp(D)$,  i.e., $\omega_{\varphi}$ is equivalent to a conical K\"ahler metric along $D$.
\end{theorem}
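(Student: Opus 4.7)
The strategy is to approximate \eqref{m.a} by a family of smooth Monge--Amp\`ere equations and to establish a uniform $C^2$-estimate for their solutions via an Aubin--Yau type maximum principle. To regularize, set $\theta_{j,\de} := \omega + \ep_j \ddbar (|s_j|_{h_j}^2 + \de)^{\be_j}$ and $\theta_\de := \sum_{j=1}^{N} \theta_{j,\de}$; this is a smooth K\"ahler form on $X$ for $\de > 0$ small, converging to $\theta$ on $X \setminus Supp(D)$ as $\de \to 0$. Correspondingly replace $|s_j|_{h_j}^{2(1-\be_j)}$ in the right-hand side of \eqref{m.a} by $(|s_j|_{h_j}^2 + \de)^{1-\be_j}$, and apply Yau's theorem to obtain smooth solutions $\varphi_\de$. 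A standard Ko\l odziej-type $L^\infty$-estimate (or a direct maximum principle when $\lambda \ge 0$) gives $\|\varphi_\de\|_{C^0} \le C$ uniformly, and a stability argument identifies the limit of $\varphi_\de$ with the given bounded solution $\varphi$.

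The core of the argument is the uniform $C^2$-estimate. I would apply the maximum principle to
\[
H_\de := \log \tr_{\omega_{\varphi_\de}} \theta_\de - A\varphi_\de,
\]
for a large constant $A$. Computing $\Delta_{\omega_{\varphi_\de}} H_\de$ and using the regularized Monge--Amp\`ere equation yields a pointwise inequality of the schematic form
\[
\Delta_{\omega_{\varphi_\de}} H_\de \ge -C + (A - B_\de)\tr_{\omega_{\varphi_\de}} \theta_\de - An,
\]
where $-B_\de$ is any lower bound on the holomorphic bisectional curvature of $\theta_\de$. If $B_\de$ were uniform in $\de$, choosing $A > B_\de + 1$ and evaluating at the maximum of $H_\de$ would produce the required bound on $\tr_{\omega_{\varphi_\de}} \theta_\de$, as in the smooth-divisor case.

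The main obstacle is that for an SNC divisor one has $B_\de \to \infty$ as $\de \to 0$: cross terms mixing derivatives of $(|s_i|_{h_i}^2+\de)^{\be_i}$ and $(|s_j|_{h_j}^2+\de)^{\be_j}$ in the bisectional curvature of $\theta_\de$ blow up along $D_i \cap D_j$. My plan is to exploit the sum decomposition $\theta_\de = \sum_j \theta_{j,\de}$: each summand depends on only one defining section and has bisectional curvature controlled uniformly by the smooth-divisor analysis, so the pathological contributions to the curvature of $\theta_\de$ are confined to the explicit cross terms. The hope is that in the Aubin--Yau computation these cross contributions either enter with a favorable sign, thanks to the positivity of $\omega_{\varphi_\de}$, or can be absorbed by incorporating barrier potentials of the form $B\sum_j (|s_j|_{h_j}^2+\de)^{\be_j}$ into $H_\de$, whose Laplacians with respect to $\omega_{\varphi_\de}$ produce compensating positive terms in precisely the singular directions. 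Once a uniform upper bound on $\tr_{\omega_{\varphi_\de}} \theta_\de$ is established, the reverse inequality follows from the regularized Monge--Amp\`ere equation and the $L^\infty$ bound on $\varphi_\de$. Letting $\de \to 0$ then yields $C^{-1}\theta \le \omegaphi \le C\theta$ on $X \setminus Supp(D)$.
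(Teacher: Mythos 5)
Your proposal stalls exactly at the point that constitutes the entire difficulty of the theorem, and the step you describe as a ``hope'' is a genuine gap rather than a routine verification. In the Chern--Lu/Aubin--Yau inequality for $H_\de = \log \tr_{\omega_{\varphi_\de}}\theta_\de - A\varphi_\de$, one needs a bound (upper for Chern--Lu, lower for Aubin--Yau) on the holomorphic bisectional curvature of the reference metric $\theta_\de$ that is \emph{uniform in} $\de$. As you note, for a simple normal crossing divisor the cross terms involving two different sections $s_i, s_j$ cause this curvature to blow up near $D_i \cap D_j$ as $\de \to 0$. Neither of your proposed remedies is substantiated: the cross terms do not enter with a definite sign in general, and the decomposition $\theta_\de = \sum_j \theta_{j,\de}$ does not help directly because curvature is not additive in the metric --- controlling the curvature of each summand says nothing about the curvature of the sum. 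Taming these terms (via carefully chosen auxiliary potentials and a long explicit curvature computation) is precisely what occupies Campana--Guenancia--P\u{a}un and Guenancia--P\u{a}un, and is what initially restricted the former to small cone angles. Without carrying out that analysis, your argument does not close.

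The paper avoids this obstruction entirely by never running the maximum principle against the full SNC model metric. Instead it proves, for each $j$ separately, the lower bound $\omega_\varphi \ge a\,\theta_j$ where $\theta_j$ is conical along the \emph{single smooth} divisor $D_j$: the remaining factors $\prod_{i\ne j}|s_i|_{h_i}^{2(1-\be_i)}$ are absorbed into the volume form as a quasi-plurisubharmonic weight $f_j$, which is regularized by Demailly's theorem, and the resulting equations are solved in the $C^{2,\al,\be_j}$ spaces of the smooth-divisor theory. The Chern--Lu argument is then applied with reference metric $\theta_j$, whose bisectional curvature \emph{is} bounded above uniformly (this is the known smooth-divisor estimate), using the quantity $\log(|s_j|^{2\de}\tr_{\omega_{j,k}}\theta_j) - B(\varphi_{j,k}-\ep_j|s_j|^{2\be_j})$. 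Summing the resulting lower bounds over $j$ gives $\omega_\varphi \ge C^{-1}\theta$, and the upper bound is then free: since $\theta^n$ and $\omega_\varphi^n$ are uniformly equivalent (both behave like $\Omega/\prod_j |s_j|^{2(1-\be_j)}$), the lower bound on the metric together with the comparison of determinants forces $\omega_\varphi \le C\theta$ by linear algebra. If you want to salvage your approach, the cleanest fix is to adopt this one-divisor-at-a-time reduction rather than attempting to control the curvature of the regularized SNC metric.
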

\noindent The theorem is essentially equivalent to certain second order estimates. We begin with 
\begin{lemma}
There exists a constant $a>0$ such that, for any $j \in \{1,\cdots,N\}$
\begin{equation}\label{lemma}
\omega_{\varphi} \geq a\theta_j
\end{equation}
on $X\backslash Supp(D)$.
\end{lemma}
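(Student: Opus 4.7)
The plan is to reduce the lemma to the smooth-divisor case by simultaneously regularizing the reference metric $\theta_j$ and the right-hand side of \eqref{m.a}, and then running a Chern--Lu type maximum principle argument on the compact smooth manifold $X$. The crucial observation is that the desired bound involves only the single component $D_j$, so the simple normal crossing geometry of $D$ enters only through the uniformly $L^p$-integrable right-hand side of the Monge--Amp\`ere equation; everything else reduces to the smooth-divisor analysis.

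For $\tau > 0$, introduce the smooth closed $(1,1)$-form $\theta_{j,\tau} := \omega + \epsilon_j\, \ddbar (|s_j|_{h_j}^2 + \tau^2)^{\beta_j}$, which is K\"ahler on $X$ for small enough $\epsilon_j$ and converges to $\theta_j$ on $X \setminus D_j$. In parallel, consider the regularized Monge--Amp\`ere equation
\[
(\omega + \ddbar \varphi_\tau)^n = \frac{e^{-\lambda \varphi_\tau}\, \Omega}{\prod_{k=1}^{N}(|s_k|_{h_k}^2 + \tau^2)^{1-\beta_k}},
\]
which admits a smooth solution $\varphi_\tau$ on $X$ by Yau's theorem, with $\|\varphi_\tau\|_{L^\infty(X)}$ bounded uniformly in $\tau$ via Ko{\l}odziej's estimate. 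Applying $-\ddbar \log$ to the equation and using that each $\ddbar \log(|s_k|_{h_k}^2 + \tau^2)$ is non-negative up to a smooth bounded error, one obtains a uniform-in-$\tau$ Ricci lower bound $\ric(\omega_{\varphi_\tau}) \geq \lambda \omega_{\varphi_\tau} - C_0\, \theta_{j,\tau}$ (after using $\omega \leq C\theta_{j,\tau}$). A direct local computation, essentially the same as in the smooth-divisor case, also yields a uniform upper bound $K$ on the holomorphic bisectional curvature of $\theta_{j,\tau}$.

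With these two ingredients, the standard Chern--Lu inequality reads
\[
\Delta_{\omega_{\varphi_\tau}} \log \tr_{\omega_{\varphi_\tau}} \theta_{j,\tau} \geq -K\, \tr_{\omega_{\varphi_\tau}} \theta_{j,\tau} - C_1.
\]
Apply the maximum principle on $X$ to $H_\tau := \log \tr_{\omega_{\varphi_\tau}} \theta_{j,\tau} - A\bigl(\varphi_\tau - \epsilon_j(|s_j|_{h_j}^2 + \tau^2)^{\beta_j}\bigr)$ for $A > K$. Using the identity $\Delta_{\omega_{\varphi_\tau}}\bigl(\varphi_\tau - \epsilon_j(|s_j|_{h_j}^2+\tau^2)^{\beta_j}\bigr) = n - \tr_{\omega_{\varphi_\tau}} \theta_{j,\tau}$, the maximum point yields $(A-K)\tr_{\omega_{\varphi_\tau}}\theta_{j,\tau} \leq C_1 + An$; since the subtracted term is uniformly bounded on $X$ in $\tau$, this propagates to a pointwise bound $\tr_{\omega_{\varphi_\tau}}\theta_{j,\tau} \leq C$, i.e., $\omega_{\varphi_\tau} \geq a\,\theta_{j,\tau}$ uniformly in $\tau$.

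Finally, letting $\tau \to 0$ and invoking standard local $C^\infty$ convergence of $\varphi_\tau$ to $\varphi$ on compact subsets of $X \setminus \mathrm{Supp}(D)$ (where the regularized right-hand side is uniformly smooth) yields the desired inequality $\omega_\varphi \geq a\,\theta_j$ on $X \setminus \mathrm{Supp}(D)$. The main technical obstacle is the uniform bisectional curvature upper bound on $\theta_{j,\tau}$; fortunately, since $\theta_{j,\tau}$ involves only the single divisor $D_j$, this reduces to the computation already present in the smooth-divisor case and is not complicated by the other components of the simple normal crossing divisor $D$.
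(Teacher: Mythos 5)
Your overall strategy---mollify both the reference metric and the right-hand side, run Chern--Lu on the smooth approximants, and let $\tau\to 0$---founders on the step you dismiss as ``a direct local computation'': the holomorphic bisectional curvature of $\theta_{j,\tau}=\omega+\epsilon_j\ddbar(|s_j|_{h_j}^2+\tau^2)^{\beta_j}$ is \emph{not} bounded above uniformly in $\tau$. Already in the one-dimensional model, writing $t=|z|^2$ one has
\begin{equation*}
\partial_z\partial_{\bar z}\,(t+\tau^2)^{\beta}=\beta\,(t+\tau^2)^{\beta-2}(\beta t+\tau^2),
\end{equation*}
and a short computation shows that the Gauss curvature of the metric $1+\epsilon\,\partial_z\partial_{\bar z}(t+\tau^2)^{\beta}$ at $z=0$ equals $2\epsilon\beta(1-\beta)\,\tau^{2\beta-4}\bigl(1+\epsilon\beta\tau^{2\beta-2}\bigr)^{-2}\sim c\,\tau^{-2\beta}\to+\infty$ as $\tau\to 0$. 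So there is no uniform constant $K$ in your Chern--Lu inequality, and the constant $a$ you extract degenerates in the limit. This is not a technicality: the failure of the naive mollification to have uniformly bounded-above bisectional curvature is exactly the obstruction that forced \cite{CGP} to design a special regularizing function $\chi_\tau$ (and even then to restrict to angles $\beta\le 1/2$), with the general case requiring the further arguments of \cite{GP}. The upper curvature bound available from the smooth-divisor literature \cite{JMR} is for the genuine cone metric $\omega+\epsilon\ddbar|s|^{2\beta}$ on $X\setminus D_j$, not for its $\tau$-regularization.

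The paper's proof is arranged precisely to avoid this: it never regularizes transversally to $D_j$. It keeps the honest cone metric $\theta_j$ (whose bisectional curvature upper bound is known) and the honest conical solutions $\varphi_{j,k}\in C^{2,\alpha,\beta_j}(X)$ supplied by the smooth-divisor existence theory, and regularizes only the quasi-plurisubharmonic function $f_j=\log\prod_{i\ne j}|s_i|_{h_i}^{2(1-\beta_i)}$ coming from the \emph{other} components, via Demailly's theorem; that approximation enters the Chern--Lu argument only through the Ricci lower bound $\textnormal{Ric}(\omega_{j,k})\ge -C\theta_j$, where uniformity is easy, and the maximum principle is localized away from $D_j$ by the factor $|s_j|_{h_j}^{2\delta}$ in the test quantity. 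If you wanted to salvage your scheme you would have to replace $(|s_j|^2+\tau^2)^{\beta_j}$ by the carefully constructed regularization of \cite{CGP,GP} and reprove their curvature estimates, at which point you have reproduced the original argument rather than simplified it. Two smaller points: for $\lambda=1$ Yau's theorem gives neither existence of $\varphi_\tau$ nor its convergence to the given $\varphi$ (the paper treats this case separately in Remark \ref{remark} by approximating the solution on the right-hand side), and for $\lambda=0$ your regularized equation needs a normalizing constant for solvability.
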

\begin{proof}
\noindent We first assume that $\lambda = 0$. The proof in the other two cases is similar (cf. Remark \ref{remark}) with only a minor modification in the case of $\lambda = 1$. We set $f_j = \log{(\prod_{i=1, ... , N, i\neq j}{|s_i|_{h_i}^{2(1-\be_i)}})}$. Then for some constant $A>>1$, $ \sqrt{-1}\partial\bar\partial f_j > -A\omega$ as currents. By Demailly's regularization theorem \cite{De}, there exist functions $F_{j,k} \in  C^{\infty}(X)$ such that $F_{j,k} \searrow f_j$ and $\sqrt{-1}\partial\bar\partial F_{j,k} >  -A\omega$.  Now, consider the following family of Monge-Amp\`ere equations
\begin{equation}
\begin{cases}
(\omega + \sqrt{-1}\partial\bar\partial\varphi_{j,k})^n = \frac{e^{(-F_{j,k} + c_{j,k})}  \Omega}{|s_j|^{2(1-\be_j)}_{h_j}}\\
\omega_{j,k} = \omega + \sqrt{-1}\partial\bar\partial\varphi_{j,k} > 0\\
\sup_{M}\varphi_{j,k} = 0.
\end{cases}
\label{approx m.a}
\end{equation}
It is well known \cite{D, B,JMR,CDS2} that  there always exists a solution $\varphi_{j,k}$ in $C^{2,\al,\be_j}(X)$ for some $\al \in (0,1)$. Here $C^{2,\al,\be}(X)$ are the H\"older spaces defined in \cite{D}. Note that by integrating both sides of the equation, it is easy to see that the constants $c_{j,k}$ are uniformly bounded and converge to zero as $k \rightarrow \infty$. Since $\be_j \in (0,1)$, from \eqref{approx m.a} it is clear that $\omega_{j,k}^n/\Omega \in L^{1+\epsilon}(X,\Omega)$ for some $\epsilon > 0$ with uniform control over the $L^{1+\epsilon}$ norm. So, by Kolodziej's theorem \cite{K1}, the solutions $\varphi_{j,k}$ are uniformly bounded in the $C^0$ norm. In fact, since from the equation $\omega_{j,k}^n/\Omega \rightarrow \omega_{\varphi}^n/\Omega$ in $L^{1}(X,\Omega)$, by the stability of solutions of complex Monge-Amp\`ere equations \cite{K2}, $|\varphi_{j,k} - \varphi|_{C^0(X)} \rightarrow 0$, as $k\rightarrow \infty$.

To obtain second order estimates, we note that $tr_{\omega_{j,k}}\theta_j$ is bounded since $\varphi_{j,k} \in C^{2,\al,\be_j}(X)$, and so for any $\delta > 0$ and $B>0$, the quantity
\begin{equation}\label{schwarz}
Q = \log{(|s_j|_{h_j}^{2\delta}tr_{\omega_{j,k}}\theta_j}) - B\left(\varphi_{j,k} - \epsilon_j |s_j|^{2\beta_j}_{h_j} \right)
\end{equation} 
attains its maximum value at some $p_{max} \in X\backslash Supp(D_j)$. Without loss of generality, we can assume that $|s_j|_{h_j}\leq 1$ on $X$. First, it follows from \eqref{approx m.a} and the fact that $\omega \leq c \theta$ for some $c>0$, that there exists a uniform $C>0$ such that $Ric(\omega_{j,k}) > -C\theta_j$. Next, the bisectional curvature of $\theta_j$ is bounded above \cite{JMR}. Hence by the Chern-Lu inequality \cite{C,L,Y1}, there exist constants $B,C>0$ independent of $j,k$ and $\delta$ such that 
\begin{equation*}
\Delta_{\omega_{j,k}}Q \geq tr_{\omega_{j,k}}\theta_j - C.
\end{equation*}
\noindent By the maximum principle and the uniform $C^0$ estimates, there exists an $a>0$ such that $tr_{\omega_{j,k}}\theta_j  \leq a^{-1}/|s_j|_{h_j}^{2\delta}$ on $X$. Letting $\delta \rightarrow 0$,
\begin{equation*}
\omega_{j,k} \geq a\theta_j.
\end{equation*}
 \noindent Taking limit as $k\rightarrow \infty$ we prove that $ \omega_{\varphi} \geq a\theta_j$ as currents. It follows, for instance from the regularization properties of Monge-Amp\`ere flows \cite{ST}, that $\omega_{\varphi}$ is in fact smooth away from $Supp(D)$ and hence the inequality must be point-wise. This completes the proof of the lemma. 
\end{proof}
\noindent \textit{\bf{Proof of Theorem \ref{mt}}.}   By adding the lower bounds from (\ref{lemma}) for $j=1,\cdots,N$,  we prove that there exists $C>0$ such that
\begin{equation*}
\omega_{\varphi} \geq  C^{-1} \theta.
\end{equation*}
\noindent Since $\theta$ is locally equivalent to a cone metric with angle $2\pi\be_j$ along $D_j$, it is easy to check that
\begin{equation*}
\theta^n = \frac{\Omega^\prime}{\prod_{j=1}^{N}{|s_j|_{h_j}^{2(1-\be_j)}}}
\end{equation*} 
\noindent for some continuous nowhere vanishing volume form $\Omega^{\prime}$, i.e., $\theta^n$ and $\omega_{\varphi}^n$ are uniformly equivalent on $X\backslash Supp(D)$. Together with the lower bound on the metric, it directly gives the required upper bound on the metric. \qed 
\begin{remark}\label{remark}
The proof with $\lambda = -1$ can be carried out exactly as above. When $\lambda = 1$, we approximate the solution $\varphi$ on the right hand  side of \eqref{m.a} by quasi-plurisubharmonic functions as in \cite{GP} and again proceed as above.
\end{remark}

\bigskip
\medskip

\end{document}